\newtheorem{theorem}{Theorem}[section]
\newtheorem{definition}[theorem]{Definition}
\newtheorem{problem}[theorem]{Problem}
\newtheorem{lemma}[theorem]{Lemma}
\newtheorem{proposition}[theorem]{Proposition}
\newtheorem{remark}[theorem]{Remark}
\newcommand{\rr}{\mathbb{R}}
\title{\bf The general theory of superoscillations and supershifts in several variables}
\date{}
\author{ F. Colombo
 \footnote{Politecnico di Milano, Dipartimento di Matematica, Via E. Bonardi, 9 20133 Milano, Italy, \ {\tt fabrizio.colombo@polimi.it, \, irene.sabadini@polimi.it,\, stefano.pinton@polimi.it}},\ S. Pinton$^{*}$, \
 I. Sabadini$^{*}$,\
  D.C. Struppa\footnote{The Donald Bren Presidential Chair in Mathematics, Chapman University, Orange, USA, \ {\tt struppa@chapman.edu}}
}
\begin{document}
\maketitle

\begin{abstract}
In this paper we describe a general method to generate
superoscillatory functions of several variables starting from a superoscillating sequence of one variable.
Our results are based on the study of suitable infinite order differential operators on holomorphic functions
with growth conditions of exponential type, where additional constraints are required when dealing with infinite
order differential operators whose symbol is a function that is holomorphic in some open set, but not necessarily entire.
The results proved for the superoscillating sequence in several variables are
extended to sequences of supershifts  in several variables.
 \end{abstract}
\vskip 1cm
\par\noindent
 AMS Classification: 26A09, 41A60.
\par\noindent
\noindent {\em Key words}: General superoscillatory functions, supershifts in several variables.

\vskip 1cm


\section{Introduction }
Superoscillating functions are  band-limited functions that  can oscillate faster than
their fastest Fourier component. Physical phenomena associated with superoscillatory functions
have been known for a long time
for example in antennas theory see \cite{TDFG},
and in the context of weak values in quantum mechanics, see \cite{aav}.
In more recent years there has been  a wide interest
in the theory of superoscillating functions
and of supershifts, a notion that generalizes the one of superoscillations, and that was introduced in the literature in
 order to study the evolution of superoscillations as initial data
 of the Schr\"odinger equation of other field equations, like Dirac or Klein-Gordon equations.

An introduction to superoscillatory functions in one variable and some applications to Schr\"odinger evolution of superoscillatory initial data can be found in \cite{acsst5}.
Superoscillatory functions in several variables have been rigorously defined and
 studied  in \cite{JFAA} and in \cite{ACJSSST22} where we have initiated also the theory of supershifts in more then one variable.
The aim of this paper is to remove the restrictions in \cite{JFAA,ACJSSST22}
 and to obtain a very general theory of superoscillations and supershifts.

\medskip

Our results are directed to a general audience of physicists, mathematicians, and engineers, and
our main tool is the theory of infinite order differential operators acting on spaces of holomorphic functions.
The literature on superoscillations is quite large, and without claiming completeness we have tried to mention some of the most relevant (and recent) results. Papers
\cite{ABCS19}-\cite{acsst5}, \cite{uno}, \cite{Jussi}, \cite{due}, \cite{Pozzi} and \cite{PeterQS} deal with the issue of permanence of superoscillatory behavior when evolved under a suitable Schr\"odinger equation; papers \cite{berry2}-\cite{b4}, \cite{kempf1}-\cite{kempf2HHH} and \cite{sodakemp} are mostly concerned with the physical nature of superoscillations, while papers \cite{QS3}, \cite{AShushi}, \cite{AOKI}-\cite{aoki3}, \cite{Talbot}-\cite{CSSYgenfun} develop in depth the mathematical theory of superoscillations. Finally we have cited \cite{acsst5} as a good reference for the state of the art on the mathematics of superoscillations until 2017, and the {\it Roadmap on Superoscillations} \cite{Be19}, where the most recent advances in superoscillations and their applications to technology are well explained by the leading experts in this field.

In this paper we extend the results in \cite{ACJSSST22}
considering analytic functions in one variable $G_1,\ldots, G_d$, $d\geq2$, whose
Taylor series at zero have radius of convergence grater than or equal to 1. Thus we define general superoscillating functions
of several variables as expressions of the form
$$
F_n(x_1,x_2,\ldots,x_d):=\sum_{j=0}^n Z_j(n,a)e^{ix_1 G_1(h_j(n))}e^{ix_2G_2(h_j(n))} \ldots e^{ix_d G_d(h_j(n))}
$$
where $Z_j(n,a)$, $j=0,...,n$, for $n\in \mathbb{N}_0$ are suitable coefficients of a superoscillating function in one variable as we will see in the sequel.
We will give conditions on the functions
$G_1,\ldots, G_d$ in order that
$$
\lim_{n\to \infty}F_n(x_1,x_2,\ldots,x_d)=e^{ix_1 G_1(a) }e^{ix_2 G_2(a) }\ldots e^{ix_d G_d(a)},
$$
so that, when $|a|>1$, $F_n(x_1,x_2,\ldots,x_d)$ is superoscillating.
 Moreover, we shall also treat the case of sequences that admit a supershift in $d\geq 2$ variables.

The paper is organized in four sections including the introduction.
Section 2 contains the preliminary material on superoscillations, the relevant function spaces and their topology, and the study of the continuity of some infinite order differential operators acting on such spaces. Section 3 is the main part of the paper and contains the definition of superoscillating functions in $d\geq 2$ variables as well as some results. Section 4 discusses the notion of supershift in this framework.

\section{Preliminary results on infinite order differential operators}

We begin this section with some preliminary material on superoscillations and  supershifts in one variable. Then we introduce and study some infinite order differential operators that will be of crucial importance
to define and study superoscillations and supershifts in several variables.

\begin{definition}\label{SUPOSONE}
We call {\em generalized Fourier sequence}
a sequence of the form
\begin{equation}\label{basic_sequenceq}
f_n(x):= \sum_{j=0}^n Z_j(n,a)e^{ih_j(n)x},\ \ \ n\in \mathbb{N},\ \ \ x\in \mathbb{R},
\end{equation}
where $a\in\mathbb R$, $Z_j(n,a)$ and $h_j(n)$
are complex and real valued functions of the variables $n,a$ and $n$, respectively.
The sequence (\ref{basic_sequenceq})
 is said to be {\em a superoscillating sequence} if
 $\sup_{j,n}|h_j(n)|\leq 1$ and
 there exists a compact subset of $\mathbb R$,
 which will be called {\em a superoscillation set},
 on which $f_n(x)$ converges uniformly to $e^{ig(a)x}$,
 where $g$ is a continuous real valued function such that $|g(a)|>1$.
\end{definition}
The classical Fourier expansion is obviously not a superoscillating sequence since its frequencies are not, in general, bounded.

In the recent paper \cite{newmet} we enlarged the class of superoscillating functions, with respect to the existing literature,
 and we
 solved the following problem.
\begin{problem}\label{gsgdfaA}
Let $h_j(n)$ be a given set of points in $[-1,1]$, $j=0,1,...,n$, for $n\in \mathbb{N}$ and let $a\in \mathbb{R}$ be such that $|a|>1$.
Determine the coefficients $X_j(n)$ of the sequence
$$
f_n(x)=\sum_{j=0}^nX_j(n)e^{ih_j(n)x},\ \ \ x\in \mathbb{R}
$$
in such a way that
$$
f_n^{(p)}(0)=(ia)^p,\ \ \ {\rm for} \ \ \ p=0,1,...,n.
$$
\end{problem}
\begin{remark}
The conditions $f_n^{(p)}(0)=(ia)^p$  mean that the functions $x\mapsto e^{iax}$ and $x\mapsto f_n(x)$ have the same derivatives at the origin, for
$p=0,1,...,n$,  and therefore the same Taylor polynomial of order $n$.

\end{remark}

\begin{theorem}[Solution of Problem \ref{gsgdfaA}]\label{exsolution}
Let $h_j(n)$ be a given set of points in $[-1,1]$, $j=0,1,...,n$ for $n\in \mathbb{N}$ and let $a\in \mathbb{R}$ be such that $|a|>1$.
If $h_j(n)\not= h_i(n)$, for every $i\not=j$,
then the coefficients $X_j(n,a)$ are uniquely determined and given by
\begin{equation}\label{sxplsolut}
X_j(n,a)=
\prod_{k=0,\  k\not=j}^n\Big(\frac{h_k(n)-a}{h_k(n)-h_j(n)}\Big).
\end{equation}
As a consequence, the sequence
$$
f_n(x)=\sum_{j=0}^n\prod_{k=0,\  k\not=j}^n\Big(\frac{h_k(n)-a}{h_k(n)-h_j(n)}\Big)\ e^{ix h_j(n)},\ \ \ x\in \mathbb{R}
$$
solves Problem \ref{gsgdfaA}.
Moreover, when the holomorphic extensions of the functions $f_n$ converge in $A_1$, we have
$$
\lim_{n\to\infty}f_n(x)=e^{iax},\ \ {\rm for \ all} \ \ x\in \mathbb{R}.
$$
\end{theorem}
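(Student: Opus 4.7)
The plan is to treat the three assertions sequentially: first show that the coefficients are uniquely determined by reducing the Taylor conditions to a Vandermonde system, then identify the solution explicitly via Lagrange interpolation, and finally pass to the limit by comparing Taylor coefficients at the origin. For the first assertion, I would differentiate $f_n(x)=\sum_{j=0}^n X_j(n,a)e^{ih_j(n)x}$ term by term and evaluate at $x=0$, so that the condition $f_n^{(p)}(0)=(ia)^p$ becomes, after cancelling the common factor $i^p$, the linear system
$$
\sum_{j=0}^n X_j(n,a)\, h_j(n)^p \,=\, a^p,\qquad p=0,1,\ldots,n.
$$
The coefficient matrix is the transpose of the $(n+1)\times(n+1)$ Vandermonde matrix in the nodes $h_0(n),\ldots,h_n(n)$; since these are pairwise distinct by hypothesis, this matrix is invertible, which establishes both existence and uniqueness of the $X_j(n,a)$.

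For the explicit formula, I would rephrase the system as the requirement that $\sum_{j=0}^n X_j(n,a)\,P(h_j(n))=P(a)$ for every polynomial $P$ of degree at most $n$. Inserting the Lagrange basis polynomials
$$
L_j(t):=\prod_{k\neq j}\frac{t-h_k(n)}{h_j(n)-h_k(n)},
$$
which satisfy $L_j(h_i(n))=\delta_{ij}$, collapses the left-hand side to $X_j(n,a)=L_j(a)$. Multiplying numerator and denominator of each of the $n$ factors by $-1$ then rewrites this as the stated product $\prod_{k\neq j}(h_k(n)-a)/(h_k(n)-h_j(n))$.

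For the limit statement, I would note that by construction the order $n$ Taylor polynomial of $f_n$ at the origin coincides with that of $e^{iax}$, namely $\sum_{p=0}^n (iax)^p/p!$. Each $f_n$ sits in $A_1$ because $|h_j(n)|\le 1$ forces exponential type at most $1$, so the assumed convergence $f_n\to F$ in $A_1$ is meaningful. The topology of $A_1$ is strong enough for the evaluation of every Taylor coefficient at $0$ to be a continuous functional, hence $F^{(p)}(0)=(ia)^p$ for all $p\ge 0$. Since $F$ is entire and shares its full Taylor expansion at $0$ with $e^{iax}$, the two functions coincide on all of $\mathbb{R}$.

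The main difficulty is not computational: steps one and two amount to recognising Lagrange interpolation inside a Vandermonde system, and step three reduces to the continuity of Taylor coefficient extraction on $A_1$. The only delicate point is keeping track of the $A_1$-topology so as to justify the interchange of limit and derivative at $0$; once convergence in $A_1$ is taken as a hypothesis, this last step is routine.
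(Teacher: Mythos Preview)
Your argument is correct. Note, however, that the paper does not supply its own proof of this theorem: the result is imported from \cite{newmet} and stated here as background, so there is no in-paper proof to compare against. Your three-step route (reduction to a Vandermonde system, identification of the solution via Lagrange interpolation, passage to the limit by continuity of Taylor-coefficient evaluation on $A_1$) is the natural one and matches the standard argument for this type of statement.

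One small point worth making explicit in the third step: you use that convergence in $A_1$ forces $f_n^{(p)}(0)\to F^{(p)}(0)$ for every fixed $p$. This is immediate, since convergence in some $A_{1,B}$ implies $\sup_{\xi\in\mathbb{C}}|f_n(\xi)-F(\xi)|e^{-B|\xi|}\to 0$, hence uniform convergence on every compact disc, and then Cauchy's integral formula gives convergence of each derivative at the origin. Combined with the fact that $f_n^{(p)}(0)=(ia)^p$ for all $n\ge p$, this yields $F^{(p)}(0)=(ia)^p$ for every $p$, and hence $F(\xi)=e^{ia\xi}$ as entire functions. Your write-up already says this in words; spelling out the Cauchy-formula justification removes any residual doubt about ``continuity of Taylor coefficient extraction''.
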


Our approach to the study of superoscillatory functions in one or several
variables makes use of infinite order differential operators. Such operators naturally act on spaces of holomorphic functions. This is the reason for which we consider the holomorphic
 extension to entire functions of the sequence
$f_n(x)$ defined in (\ref{SUPOSONE}) by
replacing the real variable $x$ with the complex variable $\xi$.
For the sequences of entire functions we shall consider, a natural notion of convergence is the convergence
 in the space $A_1$ or in the space $A_{1,B}$ for some real positive constant $B$ (see the following definition and considerations).

\begin{definition}\label{sect2-def1}

The space $A_1$ is the  complex algebra of entire functions such that there exists $B>0$ such that
\begin{equation}\label{1B}
\sup\limits_{\xi\in \mathbb{C}} \big(|f(\xi)|\, \exp(-B|\xi|)\big)  <+\infty.
\end{equation}
\end{definition}
The space $A_1$ has a rather complicated topology, see e.g. \cite{BG}, since it is a linear space obtained via an inductive limit. For our purposes, it is enough to consider, for any fixed $B>0$,
 the set $A_{1,B}$ of functions $f$ satisfying \eqref{1B}, and to observe that
$$
\|f\|_{B}:=\sup\limits_{\xi\in \mathbb{C}} \big(|f(\xi)|\, \exp(-B|\xi|)\big)
$$
defines a norm on $A_{1,B}$, called the $B$-norm. One can prove that $A_{1,B}$ is a Banach space with respect to this norm.

Moreover, let $f$ and a sequence $(f_n)_n$ belong to $A_1$; $f_n$ converges to $f$ in $A_1$ if and only if there exists $B$ such that $f,f_n\in A_{1,B}$ and
\[
\lim\limits_{n\rightarrow\infty}\sup\limits_{\xi\in \mathbb{C}} \big|f_n(\xi)-f(\xi)\big|e^{-B|\xi|}=0.
\]
 With these notations and definitions we can make the notion of continuity explicit (see \cite{aoki3}):

 A linear operator $\mathcal{U}:\ A_1\to A_1$ is continuous if and only if for any $B>0$ there exists $B'>0$ and $C>0$ such that
 \begin{equation}\label{contA1}
 \mathcal{U}(A_{1,B})\subset A_{1,B'} \ {\rm and}\qquad
 \| \mathcal{U}(f)\|_{B'} \leq C\| f\|_B, \qquad {\rm for\ any\  } f\in A_{1,B}.
 \end{equation}

The following result, see Lemma \ref{BETOA1} in \cite{AOKI}, gives a characterization of the functions in $A_1$ in terms of the coefficients appearing in their Taylor series expansion.
\begin{lemma}\label{BETOA1}
The entire function
$$
f(\xi)=\sum_{j=0}^\infty a_j\xi^j
$$
belongs to $A_1$
if and only if there exists $C_f>0$ and $b>0$ such that
$$
|a_j|\leq C_f \frac{b^j}{\Gamma(j+1)}.
$$
\end{lemma}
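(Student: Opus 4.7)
The plan is to verify the two implications separately, using Cauchy's integral formula together with Stirling's estimate to pass between a global growth bound on $f$ and a coefficient decay of the form $|a_j|\le C_f\,b^j/\Gamma(j+1)$.

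Sufficiency is the easy direction. Suppose $|a_j|\le C_f\,b^j/\Gamma(j+1)$ for all $j\ge 0$. Then termwise estimation of the Taylor series gives $|f(\xi)|\le C_f\sum_{j=0}^\infty (b|\xi|)^j/j! = C_f\,e^{b|\xi|}$ for every $\xi\in\cc$, so $f\in A_{1,b}\subset A_1$ with $\|f\|_b\le C_f$. Nothing more than absolute convergence of the exponential series is used.

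For necessity, assume $f\in A_1$; then $f\in A_{1,B}$ for some $B>0$, meaning $|f(\xi)|\le \|f\|_B\,e^{B|\xi|}$ on $\cc$. Cauchy's formula applied on the circle $|\xi|=r>0$ yields $|a_j|\le \|f\|_B\,e^{Br}/r^j$ for every $r>0$ and every $j\ge 0$. For $j\ge 1$ the right-hand side is minimised at $r=j/B$, which produces the sharp coefficient estimate $|a_j|\le \|f\|_B\,(eB)^j/j^j$; the case $j=0$ follows from letting $r\to 0^+$, giving $|a_0|\le \|f\|_B$.

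It remains to convert the bound $(eB)^j/j^j$ into the desired form $b^j/\Gamma(j+1)$. Stirling's inequality $\Gamma(j+1)\ge \sqrt{2\pi j}\,(j/e)^j$ (for $j\ge 1$) rearranges to $(eB)^j/j^j \le \sqrt{2\pi j}\, B^j/\Gamma(j+1)$, and the polynomial factor $\sqrt{2\pi j}$ is absorbed by replacing $B$ with any $b>B$, since $\sqrt{2\pi j}\,B^j\le C\,b^j$ for a suitable constant $C=C(B,b)$. Combining these estimates produces a constant $C_f>0$ with $|a_j|\le C_f\,b^j/\Gamma(j+1)$ for all $j\ge 0$, as required. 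The only mildly delicate step is the Stirling conversion and the accompanying adjustment $b>B$, but no real obstacle arises; in fact the argument yields the sharper quantitative statement that the infimum of admissible $b$ equals the exponential type of $f$.
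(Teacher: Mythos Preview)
Your argument is the standard one and is essentially what is intended; note that the paper does not actually prove this lemma here but cites \cite{AOKI}, recording only (in the remark that follows) that the proof there yields $b=2eB$ and $C_f=\|f\|_B$. Your sufficiency direction and your Cauchy-estimate step (optimising at $r=j/B$ to obtain $|a_j|\le\|f\|_B\,(eB)^j/j^j$) are both correct.

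There is, however, a slip in the Stirling step. From $\Gamma(j+1)\ge\sqrt{2\pi j}\,(j/e)^j$ one obtains $e^j/j^j\ge\sqrt{2\pi j}/\Gamma(j+1)$, which is the \emph{reverse} of the inequality you wrote; so the rearrangement you claim does not follow from the stated bound. The repair is immediate and in fact simpler than invoking Stirling at all: the elementary inequality $j!\le j^j$ for $j\ge1$ gives directly
\[
\frac{(eB)^j}{j^j}\ \le\ \frac{(eB)^j}{\Gamma(j+1)},
\]
so one may take $b=eB$ and $C_f=\|f\|_B$ with no polynomial factor to absorb and no need to pass to a larger $b$. This is at least as sharp as the constant $b=2eB$ recorded in the paper's remark, and your closing observation that the infimum of admissible $b$ equals the exponential type of $f$ is correct.
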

\begin{remark}
To say that $f\in A_1$ means that $f\in A_{1,B}$ for some $B>0$. The computations in the proof of Lemma \ref{BETOA1} in \cite{AOKI}, show that $b=2eB$, and that we can choose $C_f=\| f \|_B$.
\end{remark}
 We now define two infinite order differential operators that will be used to study superoscillatory functions and supershifts in several variables. We shall denote by $\underline x$ the vector $(x_1,\ldots, x_d)$ in $\rr^d$.

 \begin{proposition}\label{CONTU_bis}
Let $d$ be a positive integer and let $R_\ell\in\rr_{+}\cup\{\infty\}$ for any $\ell=1,\dots,\, d$. Let $(g_{1,m}), \dots,\, (g_{d,m})$ be $d$ sequences of complex numbers such that
\begin{equation}\label{entireee_tris}
\lim \sup_{m\to\infty}|g_{\ell,m}|^{1/m}=\frac 1{R_\ell},\ \ for \ \ \ell=1,\dots,\, d.
\end{equation}
Let $x_1,\ldots, x_d\in \mathbb{R}$. Denote by $D_\xi:=\frac{\partial}{\partial \xi}$
the derivative operator with respect to the auxiliary complex variable $\xi$. We define the formal operator:
\begin{equation}\label{op1}
\mathcal{U}(x_1,x_2,\ldots, x_d,D_\xi):=\sum_{m=0}^\infty \frac{1}{m!}\sum_{k_1=0}^{\infty}\left(\sum_{k_2=0}^{k_1}\ldots \sum_{k_m=0}^{k_{m-1}} y_{k_m}y_{k_{m-1}-k_m}\ldots y_{k_1-k_2}\right) \frac {D_{\xi}^{k_1}}{i^{k_1}}
\end{equation}
where we have set
$$
y_p:=ix_1g_{1,p}+\ldots+ix_dg_{d,p},\ \ {\rm for} \ \ p=1,\ldots r \ \ {\rm with} \ \ r \in \mathbb{N}.
$$
Then, setting
 $$
 R:=\min_{\ell=1,\ldots, d} R_\ell,
 $$
 for any real value $0<B<\frac R{4e}$, the operator $\mathcal{U}(x_1,\ldots, x_d,D_\xi):A_{1,B} \to A_{1,4eB}$ is continuous for all $\underline x\in\mathbb R^d$.
\end{proposition}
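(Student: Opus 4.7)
The plan is to bound $|\mathcal{U}f(\xi)|$ pointwise on $\mathbb{C}$ by reducing \eqref{op1} to coefficient estimates. The first observation is that, after the reindexing $p_1 = k_1-k_2,\ p_2 = k_2-k_3,\ \ldots,\ p_m = k_m$, the parenthesized inner sum becomes $\sum_{p_1+\cdots+p_m=k_1}y_{p_1}\cdots y_{p_m}$, which is precisely the $k_1$-th Taylor coefficient of $Y(\xi)^m$, where $Y(\xi):=\sum_{p\geq 0}y_p\xi^p$. This identification turns a nested sum into a standard convolution admitting a clean combinatorial bound.

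The proof then rests on two independent estimates. For the coefficients: hypothesis \eqref{entireee_tris} yields, for every $\rho > 1/R$, a constant $K = K(\underline{x},\rho) > 0$ with $|y_p| \leq K\rho^p$ for all $p \geq 0$; since the number of weak compositions of $k_1$ into $m$ non-negative parts is $\binom{k_1+m-1}{m-1}$, this gives
\begin{equation*}
\bigl|[Y^m]_{k_1}\bigr| \leq K^m \rho^{k_1} \binom{k_1+m-1}{m-1}.
\end{equation*}
For the derivatives of $f \in A_{1,B}$, Lemma \ref{BETOA1} and the remark following it supply $|a_j|\leq \|f\|_B(2eB)^j/j!$ for the Taylor coefficients of $f$; differentiating term by term and shifting the summation index produces
\begin{equation*}
|D_\xi^{k_1}f(\xi)| \leq \|f\|_B (2eB)^{k_1} e^{2eB|\xi|}.
\end{equation*}

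Combining these termwise in \eqref{op1} bounds $|\mathcal{U}f(\xi)|$ by
\begin{equation*}
\|f\|_B\, e^{2eB|\xi|} \sum_{m\geq 0} \frac{K^m}{m!} \sum_{k_1\geq 0} \binom{k_1+m-1}{m-1}(2eB\rho)^{k_1}.
\end{equation*}
This is where the hypothesis $B < R/(4e)$ enters: it allows a choice of $\rho$ close enough to $1/R$ that $2eB\rho < 1$ (indeed, one can take $2eB\rho < 1/2$, leaving ample margin). The inner series then sums to $(1-2eB\rho)^{-m}$ and the outer series to $\exp\bigl(K/(1-2eB\rho)\bigr) =: C_{\underline{x},B} < \infty$. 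This simultaneously justifies absolute convergence of \eqref{op1}, so that $\mathcal{U}f$ is a well-defined entire function, and produces the bound $|\mathcal{U}f(\xi)| \leq C_{\underline{x},B}\|f\|_B e^{2eB|\xi|} \leq C_{\underline{x},B}\|f\|_B e^{4eB|\xi|}$. Hence $\mathcal{U}f \in A_{1,4eB}$ with $\|\mathcal{U}f\|_{4eB} \leq C_{\underline{x},B}\|f\|_B$, and continuity follows from \eqref{contA1}. The only genuine obstacle is recognizing the convolution structure hidden in \eqref{op1}; once that is in place, matching the geometric growth of $(y_p)$ against the exponential growth of $D_\xi^{k_1}f$ under the stated constraint on $B$ is routine.
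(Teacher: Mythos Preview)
Your argument is correct and is in fact tighter than the paper's: you obtain $|\mathcal{U}f(\xi)|\le C_{\underline{x},B}\|f\|_B\,e^{2eB|\xi|}$, so $\mathcal{U}$ even maps $A_{1,B}$ into $A_{1,2eB}$, whereas the paper's estimates end at $e^{8eB|\xi|}$ and land in $A_{1,8eB}$ (note the discrepancy with the $4eB$ in the statement). The route is genuinely different. Both arguments recognise the Cauchy-product structure of the inner sum, but the paper then expands $D_\xi^{k_1}f$ term by term and separates the resulting factorials via $(a{+}b)!\le 2^{a+b}a!b!$, the Gamma inequality $\Gamma(a{+}b{+}2)^{-1}\le\Gamma(a{+}1)^{-1}\Gamma(b{+}1)^{-1}$, Stirling's formula for $\Gamma(k_1{+}1)/\Gamma(k_1{+}\tfrac12)$, and a Mittag--Leffler bound; it then re-enters the Cauchy product after inserting the crude majorant $\sqrt{k_1-\tfrac12}\le\prod_j(k_j-k_{j+1}+2)$ to force convergence of $\sum_p(p{+}2)|y_p|(2b)^p$. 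You bypass all of this in two strokes: the closed-form derivative bound $|D_\xi^{k_1}f(\xi)|\le\|f\|_B(2eB)^{k_1}e^{2eB|\xi|}$ (an exact resummation, not an inequality chain), and the weak-composition count together with the negative-binomial identity $\sum_{k}\binom{k+m-1}{m-1}x^{k}=(1-x)^{-m}$. Your approach is shorter, uses only elementary identities, and yields a sharper target space; the paper's approach is more hands-on but accumulates losses at each Gamma/Stirling step. One cosmetic point: for $m=0$ the composition count $\binom{k_1-1}{-1}$ should be read as $\delta_{k_1,0}$, giving the identity term $f(\xi)$; you may want to say this explicitly.
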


\begin{proof} Let us consider $f\in A_{1,B}$; then we have
\[
\begin{split}
\mathcal{U}&(x_1,\ldots ,x_d,D_\xi)f(\xi)=\sum_{m=0}^\infty \frac{1}{m!}\sum_{k_1=0}^{\infty}\left(\sum_{k_2=0}^{k_1}\ldots \sum_{k_m=0}^{k_{m-1}} y_{k_m}y_{k_{m-1}-k_m}\ldots y_{k_1-k_2}\right) \frac {D_{\xi}^{k_1}}{i^{k_1}} f(\xi)
\\
&
=\sum_{m=0}^\infty \frac{1}{m!}\sum_{k_1=0}^{\infty}\left(\sum_{k_2=0}^{k_1}\ldots \sum_{k_m=0}^{k_{m-1}} y_{k_m}y_{k_{m-1}-k_m}\ldots y_{k_1-k_2}\right) \sum_{j=k_1}^\infty a_j \frac{j!}{(j-k_1)!}\xi^{j-k_1}
\\
&
=\sum_{m=0}^\infty \frac{1}{m!}\sum_{k_1=0}^{\infty}\left(\sum_{k_2=0}^{k_1}\ldots \sum_{k_m=0}^{k_{m-1}} y_{k_m}y_{k_{m-1}-k_m}\ldots y_{k_1-k_2}\right) \sum_{j=0}^\infty a_{j+k_1} \frac{(j+k_1)!}{j!}\xi^{j}.
\end{split}
\]
Taking the modulus we get
\[
\begin{split}
|\mathcal{U}&(x_1,\ldots ,x_d,D_\xi)f(\xi)|
\\
&
\leq \sum_{m=0}^\infty \frac{1}{m!}\sum_{k_1=0}^{\infty}\left(\sum_{k_2=0}^{k_1}\ldots \sum_{k_m=0}^{k_{m-1}} |y_{k_m}| |y_{k_{m-1}-k_m}|\ldots |y_{k_1-k_2}|\right) \sum_{j=0}^\infty |a_{j+k_1}| \frac{(j+k_1)!}{j!}\xi^{j}.
\end{split}
\]
and Lemma \ref{BETOA1} gives the estimate on the coefficients $a_{j+k_1}$
$$
|a_{j+k_1}|\leq C_f\frac{b^{j+k_1}}{\Gamma(j+k_1+1)}.
$$
where $b=2eB$. Using the well known inequality
$
(a+b)!\leq 2^{a+b}a!b!
$
we also have
$$
\left(j+k_1\right)!\leq 2^{j+k_1} j!k_1!
$$
so we get
$$
|\mathcal{U} (x_1,\ldots ,x_d,D_\xi)f(\xi)|\leq \sum_{m=0}^\infty \frac{1}{m!}\sum_{k_1=0}^{\infty}\left(\sum_{k_2=0}^{k_1}\ldots \sum_{k_m=0}^{k_{m-1}} |y_{k_m}| |y_{k_{m-1}-k_m}|\ldots |y_{k_1-k_2}|\right) \times
$$
$$
\times C_f\sum_{j=0}^\infty \frac{b^{j+k_1}}{\Gamma(j+k_1+1)}\frac{ 2^{j+k_1} k_1!j!}{j!}|\xi|^{j}.
$$
Now we use the Gamma function estimate
\begin{equation}\label{GAMab2}
\frac{1}{\Gamma(a+b+2)}\leq \frac{1}{\Gamma(a+1)}\frac{1}{\Gamma(b+1)}
\end{equation}
to separate the series, and we have
$$
\frac{1}{\Gamma(j-\frac{1}{2}+k_1-\frac{1}{2}+2)}\leq  \frac{1}{\Gamma(j+\frac{1}{2})}\frac{1}{\Gamma(k_1+\frac{1}{2})}
$$
and so
$$
|\mathcal{U}(x_1,\ldots ,x_d,D_\xi)f(\xi)|\leq  C_f\sum_{m=0}^\infty \frac{1}{m!}\sum_{k_1=0}^{\infty}\left(\sum_{k_2=0}^{k_1}\ldots \sum_{k_m=0}^{k_{m-1}} |y_{k_m}| |y_{k_{m-1}-k_m}|\ldots |y_{k_1-k_2}|\right)
\times
$$
$$
 \times \frac{(k_1)! (2b)^{k_1}}{\Gamma(k_1+\frac{1}{2})}
 \sum_{j=0}^\infty\frac{1}{\Gamma(j+\frac{1}{2})}(2b|\xi|)^{j}.
$$
Now observe that the latter series satisfies the estimate
$$
\sum_{j=0}^\infty\frac{1}{\Gamma(k+\frac{1}{2})}(2b|\xi|)^{j}\leq Ce^{4b|\xi|}
$$
where $C$ is a positive constant,
because of the properties of the Mittag-Leffler function; moreover, the series
\begin{equation}
\label{swrCCC}
\sum_{m=0}^\infty \frac{1}{m!}\sum_{k_1=0}^{\infty}\left(\sum_{k_2=0}^{k_1}\ldots \sum_{k_m=0}^{k_{m-1}} |y_{k_m}| |y_{k_{m-1}-k_m}|\ldots |y_{k_1-k_2}|\right) \frac{(k_1)! (2b)^{k_1}}{\Gamma(k_1+\frac{1}{2})}
\end{equation}
 is convergent and is bounded by a positive real constant $C_{\underline x,G_1,\ldots, G_d}$.
 In fact, using Stirling formula for the Gamma function, we have
 $$
 m!\sim \sqrt{2\pi m}\, e^{-m}m^m, \ \ {\rm for} \  \ \ m\to \infty
 $$
 and then we deduce
 \begin{equation}\label{gam1e12}
 \frac{\Gamma(m+1)}{\Gamma(m+1/2)}\sim \frac{\sqrt{2\pi \, m}\, e^{-m}m^m}{\sqrt{2\pi (m-1/2)}\, e^{-(m-1/2)}\, (m-1/2)^{(m-1/2)}}\sim\sqrt{m-1/2}, \ \ {\rm for} \  \ \ m\to \infty
 \end{equation}
 so that
 $$
 \frac{k_1!}{\Gamma(k_1+\frac{1}{2})}
 \sim\sqrt{k_1-1/2}, \ \ {\rm for} \  \ \ k_1\to \infty.
 $$
 Now observe that the series (\ref{swrCCC}) has positive coefficients
 and so it converges  if and only if the series
 $$
\sum_{m=1}^\infty \frac{1}{m!}\sum_{k_1=1}^{\infty}\left(\sum_{k_2=0}^{k_1}\ldots \sum_{k_m=0}^{k_{m-1}} |y_{k_m}| |y_{k_{m-1}-k_m}|\ldots |y_{k_1-k_2}|\right) (2b)^{k_1}\sqrt{k_1-1/2}
 $$
 converges. Given an absolutely convergent series $\sum_{p=0}^\infty a_p$, then its $m$-th power can be computed by means of the Cauchy product as follows:
\begin{equation}\label{cp}
\left(\sum_{p=0}^{\infty} a_p\right)^m=\sum_{k_1=0}^{\infty}\sum_{k_2=0}^{k_1}\ldots\sum_{k_m=0}^{k_{m-1}} a_{k_m}a_{k_{m-1}-k_m}\ldots a_{k_1-k_2}.
\end{equation}
 Using the inequality:
 $$\sqrt{k_1-\frac 12}\leq k_1\leq k_m+(k_{m-1}-k_m)+\ldots+(k_1-k_2)\leq (k_m+2)\cdot(k_{m-1}-k_m+2)\cdot\cdot\cdot\cdot\cdot(k_1-k_2+2),$$
 where $k_1\geq k_2\geq\dots\geq k_m$, we deduce that there exists a positive constant $C_{\underline x, G_1,\ldots, G_d}$ such that the following chain of inequalities hold: 
 \[
 \begin{split}
& \sum_{m=1}^\infty \frac{1}{m!}\sum_{k_1=1}^{\infty}\left(\sum_{k_2=0}^{k_1}\ldots \sum_{k_m=0}^{k_{m-1}} |y_{k_m}| |y_{k_{m-1}-k_m}|\ldots |y_{k_1-k_2}|\right) (2b)^{k_1}\sqrt{k_1-1/2}
 \\
 &
\leq \sum_{m=1}^\infty \frac{1}{m!}\sum_{k_1=1}^{\infty}\left(\sum_{k_2=0}^{k_1}\ldots \sum_{k_m=0}^{k_{m-1}} |y_{k_m} (k_m+2)(2b)^{k_m}| |y_{k_{m-1}-k_m} (k_{m-1}-k_m+2)(2b)^{k_{m-1}-k_m}|\times \right.\\
&\qquad\qquad\qquad\quad\quad\quad\quad\quad\quad\quad\quad\quad\quad\quad \ldots\times   |y_{k_1-k_2} (k_2-k_1+2)(2b)^{k_1-k_2}|\Bigg)
 \\
 &
 = \sum_{m=1}^\infty \frac{1}{(m)!}\left [\sum_{p=0}^{\infty} |y_p| (p+2)(2b)^p \right ]^m \leq \sum_{m=1}^\infty \frac{1}{(m)!}
\left [\sum_{p=1}^\infty |x_1|(p+2)(2b)^p|g_{1,p}| \right.+\\
&\qquad\qquad\qquad\quad\quad\quad\quad\quad\quad\quad\quad\quad\quad\quad \ldots+|x_d| (p+2)(2b)^p |g_{d,p}| \Bigg ]^m
\leq C_{\underline x, G_1,\ldots, G_d}
 \end{split}
 \]
 where for the equality we used \eqref{cp}, while the last inequality follows by the assumption
 $$
 B<\frac R{4e}$$
  which implies $2b<R$. From the previous estimate we have that the series (\ref{swrCCC}) converges for all $x_1,\ldots, x_d \in \mathbb{R}$.
 So we finally have
\begin{equation}\label{estCCC}
 |\mathcal{U}(x_1,\ldots, x_d,D_\xi)f(\xi)|\leq  C_f \, C_{\underline x,G_1,\ldots, G_d}\, C\,e^{4b|\xi|},\ \ \ \underline x\in \mathbb{R}^d, \ \ \xi\in \mathbb{C}.
\end{equation}
Recalling that $b=2eB$, the estimate \eqref{estCCC} implies that $\mathcal{U}(x_1,\ldots, x_d,)f\in A_{1, 8eB}$, in fact
$$
 |\mathcal{U}(x_1,\ldots, x_d,D_\xi)f(\xi)|\,e^{-8eB|\xi|}\leq  C_f \, C_{\underline x,G_1,\ldots, G_d}\, C\ \ \ \underline x\in \mathbb{R}^d, \ \ \xi\in \mathbb{C}.
$$
Moreover, we deduce that the $8eB$-norm satisfies the estimate
$$
\|\mathcal{U}(x_1,\ldots, x_d,D_\xi )f\|_{8eB} \leq C_f \, C_{\underline x,G_1,\ldots, G_d}\, C=  C_{\underline x,G_1,\ldots,G_d}\, C \| f\|_B.
$$
Thus $\mathcal{U}(x_1,\ldots, x_d,D_\xi):A_{1,B} \to A_{1,8eB}$ is continuous for all $\underline x\in\mathbb R^d$.
 \end{proof}

\begin{remark}\label{supernova}
Whenever we fix a compact subset $K\subset \rr^d$, we have that, for any $\underline x\in K$, the constants $C_{\underline x,G_1,\ldots, G_d}$ appearing in the proof of the previous theorem are bounded by a constant which depends only on $K$ and $G_1,\ldots, G_d$. Moreover, if $R_\ell=\infty$ for any $\ell=1,\dots, d$, the continuity of the operator $\mathcal U(x_1,\dots, x_d,D_\xi)$ holds for any $B>0$ and the proof of the previous theorem shows that $\mathcal U(x_1,\dots, x_d,D_\xi)$ is a continuous operator in $A_1$.
\end{remark}


\begin{proposition}\label{contsupshif_bis}
Let $d$ be a positive integer and let $R_\ell\in\rr_{+}\cup\{\infty\}$ for any $\ell=1,\dots,\, d$. Let $(g_{1,m}), \dots,\, (g_{d,m})$ be $d$ sequences of complex numbers such that
\begin{equation}\label{entireee_bis}
\lim \sup_{m\to\infty}|g_{\ell,m}|^{1/m}=\frac 1{R_\ell},\ \ for \ \ \ell=1,\dots,\, d.
\end{equation}
We define the formal  operator
\begin{equation}\label{inftysh_bis}
\mathcal{V}(x_1,\dots , x_d, D_\xi):=\sum_{m_1=0}^\infty  g_{1,m_1}\dots \sum_{m_d=0}^\infty g_{d,m_d}x_1^{m_1}  \dots  x_d^{m_d} \frac{1}{i^{m_1+\dots +m_d}}D_\xi^{m_1+\dots +m_d},
\end{equation}
where $x_1,\dots,\, x_d\in\mathbb R,\ \xi\in\mathbb{C}$. Then, for any real value $B>0$, the operator $\mathcal{V}(x_1,\dots, x_d, D_\xi): A_{1,B}\to A_{1,8eB}$ is continuous whenever $|x_\ell |< \frac{R}{4eB}$ for any $\ell=1,\ldots, d$ where $R:=\min_{\ell=1,\dots, d}R_{\ell}$.
\end{proposition}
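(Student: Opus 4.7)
The plan is to mirror the strategy of Proposition \ref{CONTU_bis}, exploiting the fact that $\mathcal{V}$ already has a tensorial (multiplicative) structure in $m_1,\ldots,m_d$, so no Cauchy-product identity is needed. Fix $f(\xi)=\sum_{j=0}^\infty a_j\xi^j\in A_{1,B}$; by Lemma \ref{BETOA1} and the remark following it, $|a_j|\leq \|f\|_B\, b^j/\Gamma(j+1)$ with $b=2eB$. Writing $k=m_1+\dots+m_d$ and using $D_\xi^k f(\xi)=\sum_{j=0}^\infty a_{j+k}\frac{(j+k)!}{j!}\xi^j$, I would apply $\mathcal V$ term by term and pass to moduli to obtain
\[
|\mathcal V(x_1,\dots,x_d,D_\xi)f(\xi)|\leq \sum_{m_1,\dots,m_d=0}^\infty\Big(\prod_{\ell=1}^d|g_{\ell,m_\ell}||x_\ell|^{m_\ell}\Big)\sum_{j=0}^\infty|a_{j+k}|\frac{(j+k)!}{j!}|\xi|^j.
\]

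Next I would insert the coefficient estimate and apply the inequality $(j+k)!\leq 2^{j+k}j!\,k!$ together with the Gamma splitting \eqref{GAMab2}, exactly as in Proposition \ref{CONTU_bis}, so as to separate the $\xi$-series from the multi-index sum:
\[
|\mathcal V f(\xi)|\leq \|f\|_B \Big(\sum_{j=0}^\infty\frac{(2b|\xi|)^j}{\Gamma(j+\tfrac12)}\Big)\sum_{m_1,\dots,m_d=0}^\infty\Big(\prod_{\ell=1}^d|g_{\ell,m_\ell}|(2b|x_\ell|)^{m_\ell}\Big)\frac{k!}{\Gamma(k+\tfrac12)}.
\]
The $j$-series is bounded by $C\,e^{4b|\xi|}=C\,e^{8eB|\xi|}$ through the Mittag-Leffler estimate, which is what forces the target space to be $A_{1,8eB}$.

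The main obstacle is then showing that the remaining multi-index sum converges under the hypothesis $|x_\ell|<R/(4eB)$. Here the Stirling asymptotic \eqref{gam1e12} gives $k!/\Gamma(k+\tfrac12)\sim\sqrt{k-\tfrac12}$, so I would majorize $\sqrt{k-\tfrac12}\leq k+1\leq \prod_{\ell=1}^d(m_\ell+1)$, using that $\sum_\ell m_\ell\leq \prod_\ell(m_\ell+1)$ for nonnegative integers. Inserting this bound factorizes the multi-index sum into a product of one-variable series
\[
\prod_{\ell=1}^d\sum_{m_\ell=0}^\infty (m_\ell+1)|g_{\ell,m_\ell}|(2b|x_\ell|)^{m_\ell},
\]
each of which converges by the root test since $2b|x_\ell|=4eB|x_\ell|<R\leq R_\ell$ by assumption \eqref{entireee_bis}. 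Call this bound $C_{\underline x,G_1,\dots,G_d}$.

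Combining the two estimates produces $\|\mathcal V(x_1,\dots,x_d,D_\xi)f\|_{8eB}\leq C\, C_{\underline x,G_1,\dots,G_d}\|f\|_B$, which is the continuity criterion \eqref{contA1}. Note that, compared with Proposition \ref{CONTU_bis}, the proof is actually simpler: the operator $\mathcal V$ is already in product form, so the Cauchy product identity \eqref{cp} is not required, and the factorization of the multi-index sum into $d$ independent one-dimensional series is immediate once $\sqrt{k-1/2}$ has been absorbed.
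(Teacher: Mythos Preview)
Your proposal is correct and follows essentially the same approach as the paper's own proof: apply $\mathcal V$ term by term, use Lemma~\ref{BETOA1} with $b=2eB$, the factorial inequality $(j+k)!\leq 2^{j+k}j!\,k!$, the Gamma splitting \eqref{GAMab2}, the Mittag--Leffler bound on the $j$-series, and the Stirling asymptotic \eqref{gam1e12} on $k!/\Gamma(k+\tfrac12)$ before factoring the remaining multi-index sum into $d$ convergent one-variable series. The only (minor) difference is your factorization bound: the paper uses $\sqrt{m_1+\dots+m_d-\tfrac12}\leq m_1\cdots m_d$ for $m_\ell\geq 2$ and treats small indices implicitly, whereas your choice $\sqrt{k-\tfrac12}\leq k+1\leq\prod_\ell(m_\ell+1)$ works uniformly for all $m_\ell\geq 0$ and is slightly cleaner.
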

\begin{proof}
We apply the operator $\mathcal{V}(x_1,\dots, x_d,D_\xi)$ to a function $f$ in $A_{1,B}$ for $|\underline x|< \frac{R}{4eB}$. We have
\[
\begin{split}
\mathcal{V}&(x_1,\dots, x_d,D_\xi)f(\xi)=\sum_{m_1=0}^\infty  g_{1,m_1}\dots \sum_{m_d=0}^\infty g_{d,m_d}x_1^{m_1}\dots x_d^{m_d} \frac{1}{i^{m_1+\dots +m_d}}D_\xi^{m_1+\dots +m_2}f(\xi)
\\
&
=\sum_{m_1=0}^\infty  g_{1,m_1}\dots \sum_{m_d=0}^\infty g_{d,m_d}x_1^{m_1}\dots x_d^{m_d} \frac{1}{i^{m_1+\dots +m_d}}D_\xi^{m_1+\dots +m_d}\sum_{j=0}^\infty a_j\xi^j
\\
&
=\sum_{m_1=0}^\infty  g_{1,m_1}\dots \sum_{m_d=0}^\infty g_{d,m_d}x_1^{m_1}\dots x_d^{m_d} \frac{1}{i^{m_1+\dots +m_d}}\times \\
&
\times \sum_{j=m_1+\dots+m_d}^\infty a_j \frac{j!}{(j-(m_1+\dots+m_d))!}\xi^{j-(m_1+\dots+m_d)}
\\
&
=\sum_{m_1=0}^\infty  g_{1,m_1}\dots \sum_{m_d=0}^\infty g_{d,m_d}x_1^{m_1}\dots x_d^{m_d} \frac{1}{i^{m_1+\dots +m_d}}\sum_{k=0}^\infty a_{m_1+\dots +m_d+k} \frac{(m_1+\dots+m_d+k)!}{k!}\xi^{k}.
\end{split}
\]
We then have
\[
\begin{split}
|\mathcal{V}(x_1,\dots,x_d,D_\xi)f(\xi)|\leq
\sum_{m_1=0}^\infty  |g_{1,m_1}|\dots &\sum_{m_d=0}^\infty |g_{d,m_d}||x_1|^{m_1}\dots |x_d|^{m_d}\times\\
&\times \sum_{k=0}^\infty |a_{m_1+\dots +m_d+k}| \frac{(m_1+\dots +m_d+k)!}{k!}|\xi|^{k}
\end{split}
\]
and using the estimate in Lemma \ref{BETOA1}
$$
|a_{m_1+\dots m_d+k}|\leq C_f \frac{b^{m_1+\dots m_d+k}}{\Gamma(m_1+\dots+m_d+k+1)},
$$
where $b=2eB$, we get
$$
|\mathcal{V}(x_1,\dots,x_d,D_\xi)f(\xi)|\leq
\sum_{m_1=0}^\infty  |g_{1,m_1}|\dots \sum_{m_d=0}^\infty |g_{d,m_d}||x_1|^{m_1}\dots |x_d|^{m_d}  \times
$$
$$
\times
C_f \sum_{k=0}^\infty\frac{b^{m_1+\dots +m_d+k}}{\Gamma(m_1+\dots +m_d+k+1)}\frac{(m_1+\dots +m_d+k)!}{k!}|\xi|^{k}.
$$
With the estimates
$$
(m_1+\dots +m_d+k)!\leq 2^{m_1+\dots +m_d+k}(m_1+\dots +m_d)!k!
$$
and
$$
\frac{1}{\Gamma(m_1+\dots +m_d-\frac{1}{2}+k-\frac{1}{2}+2)}\leq \frac{1}{\Gamma(m_1+\dots +m_d+\frac{1}{2})}\frac{1}{\Gamma(k+\frac{1}{2})}
$$
we separate the series
$$
|\mathcal{V}(x_1,\dots, x_d,D_\xi)f(\xi)|\leq
\sum_{m_1=0}^\infty  |g_{1,m_1}|\dots \sum_{m_d=0}^\infty |g_{d,m_d}||x_1|^{m_1}\dots |x_d|^{m_d}
\times
$$
$$
\times
 \sum_{k=0}^\infty C_f b^{m_1+\dots +m_d+k}
 \frac{1}{\Gamma(m_1+\dots +m_d+\frac{1}{2})}\frac{1}{\Gamma(k+\frac{1}{2})}\frac{2^{m_1+\dots +m_d+k}(m_1+\dots +m_d)!k! }{k!}|\xi|^{k}.
$$
Finally we get
$$
|\mathcal{V}(x_1,\dots,x_d,D_\xi)f(\xi)|\leq C_f
\sum_{m_1=0}^\infty  |g_{1,m_1}|\dots \sum_{m_d=0}^\infty |g_{d,m_d}|(2b|x_1|)^{m_1}\cdots (2b|x_d|)^{m_d}
  \times
$$
$$
\times \frac{(m_1+\dots +m_d)!}{\Gamma(m_1+\dots +m_d+\frac{1}{2})} \sum_{k=0}^\infty \frac{1}{\Gamma(k+\frac{1}{2})}(2b|\xi|)^{k}.
$$
Using (\ref{gam1e12}) we have
$$
\frac{(m_1+\dots +m_d)!}{\Gamma(m_1+\dots +m_d+\frac{1}{2})}\sim\sqrt{m_1+\dots +m_d-1/2}, \ \ {\rm for} \  \ \ m_1+\dots +m_d\to \infty ,
$$
and, moreover, $\sqrt{m_1+\dots +m_d-1/2}\leq m_1\cdots m_d$ if $m_\ell \geq 2$ for any $\ell=1,\dots, d$. Since $|x_\ell|< \frac{R}{4eB}$ for any $\ell=1,\ldots, d$ and $b=2eB$, the series
$$
\sum_{m_\ell=1}^\infty m_\ell |g_{\ell,m_\ell}|(2b|x_\ell|)^{m_\ell}
$$
converges to a constant which depends on $x_\ell \in \mathbb{R}$. Thus there exist constants $C_{x_{\ell}}$ such that
$$
|\mathcal{V}(x_1,\dots, x_d,D_\xi)f(\xi)|\leq C_f C_{x_1}\dots C_{x_d} (2b|\xi|)e^{2b|\xi|}\leq C_f C_{x_1,\dots,x_d} e^{4b|\xi|}
$$
from which, recalling that $C_f=\|f\|_B$, we deduce
$$
\|\mathcal{V}(x_1,\dots, x_d,D_\xi)f\|_{8eB}\leq  C_{x_1,\dots, x_d} \|f\|_B.
$$
We conclude that the operator $\mathcal{V}(x_1,\dots, x_d, D_\xi): A_{1,B}\to A_{1,8eB}$ is continuous.
\end{proof}
\begin{remark}\label{rnova1}
Whenever we fix a compact subset
$$
K\subset \{\underline x\in\rr^d:\, |x_\ell|<\frac{R}{4eB}\,\textrm{for any $\ell=1,\ldots, d$}\},
$$
 we have that, for any $\underline x\in K$, the constants $C_{x_\ell}$'s, appearing in the proof of the previous theorem are bounded by a constant which depends only on $K$. Moreover, if $R_\ell=\infty$ for any $\ell=1,\dots, d$, the continuity of the operator $\mathcal V(x_1,\dots, x_d,D_\xi)$ holds to be true for any $\underline x\in\rr^d$ and the proof of the previous theorem shows that $\mathcal V(x_1,\dots, x_d,D_\xi)$ satisfies the conditions in \eqref{contA1}. Thus we conclude that $\mathcal V(x_1,\dots, x_d,D_\xi)$ is a continuous operator in $A_1$.
\end{remark}

\section{Superoscillating functions in several variables}

We recall some preliminary definitions related to superoscillating functions in several variables.

\begin{definition}[Generalized Fourier sequence in several variables]
For $d\in \mathbb{N}$ such that $d\geq 2$, we assume that
$(x_1,...,x_d)\in \mathbb{R}^d$. Let
 $(h_{j,\ell}(n))$,  $j=0,...,n$  for $ n\in \mathbb{N}_0$, be  real-valued sequences for $\ell=1,...,d$.
We call {\em generalized Fourier sequence in several variables}
a sequence of the form
\begin{equation}\label{basic_sequence_sev}
F_n(x_1,\ldots ,x_d)=\sum_{j=0}^n c_j(n)  e^{ix_1 h_{j,1}(n)}e^{ix_2 h_{j,2}(n)}\ldots e^{ix_d h_{j,d}(n)},
\end{equation}
where  $(c_j(n))_{j,n}$, $j=0,\ldots ,n$, for $ n\in \mathbb{N}_0$ is a  complex-valued sequence.
\end{definition}

\begin{definition}[Superoscillating sequence]\label{superoscill}
A generalized Fourier sequence  in several variables $F_n(x_1,\ldots ,x_d)$, with $d\in \mathbb{N}$
such that $d\geq 2$,
is said to be {\em a superoscillating sequence} if
$$
\sup_{j=0,\ldots ,n,\ n\in\mathbb{N}_0} \  |h_{j,\ell}(n)|\leq 1 ,\ \ {\rm for} \ \ell=1,...,d,
$$
 and there exists a compact subset of $\mathbb{R}^d$, which will be called {\em a superoscillation set}, on which
$F_n(x_1,\ldots ,x_d)$ converges uniformly to $e^{ix_1 g_1}e^{ix_2 g_2}\ldots e^{ix_d g_d}$, where $|g_\ell|>1$ for  $\ell=1,\ldots ,d$.
\end{definition}

In the paper \cite{JFAA} we studied the function theory of superoscillating functions in several variables under the additional hypothesis that
there exist $r_\ell \in \mathbb{N}$, such that
\begin{equation}\label{restric}
p= r_1q_1+\ldots +r_dq_d.
\end{equation}
In that case,
we proved that for
 $p$, $q_\ell \in \mathbb{N}$, $\ell=1,\ldots ,d$ the function
$$
F_n(x,y_1,\ldots ,y_d)=\sum_{j=0}^nC_j(n,a)
e^{ix(1-2j/n)^p}e^{iy_1(1-2j/n)^{q_1}}\ldots e^{iy_d(1-2j/n)^{q_d}}
$$
 is superoscillating when $|a|>1$, where $C_j(n,a)$ are suitable coefficients.
 In the paper \cite{ACJSSST22}, we were able to remove the condition (\ref{restric}), while here we will
 show that it is possible to replace the functions $(1-2j/n)^p$ in the terms
 $e^{ix(1-2j/n)^p}$ with more general holomorphic functions. As we shall see, different function spaces are involved in the proofs according to the fact that the  holomorphic functions are entire or not.

\begin{theorem}[The general case of $d\geq 2$ variables]\label{propm=2}
Let $d$ be a positive integer and let $R_\ell\in\rr_{+}\cup\{\infty\}$ be such that $R_\ell\geq 1$ for any $\ell=1,\dots,\, d$.
Let $G_1,\dots,\, G_d$ be holomorphic functions whose series expansion at zero is given by
\begin{equation}\label{seriesG}
G_\ell(\lambda)=\sum_{m_\ell=0}^\infty g_{\ell,m}\lambda^{m_\ell},\quad \forall \ell=1,\dots,\, d
\end{equation}
and, moreover, the sequences $(g_{\ell,m})$ satisfy the condition
$$
\lim \sup_{m\to\infty}|g_{\ell,m}|^{1/m}=\frac 1{R_\ell},\ \ \forall  \ell=1,\dots,\, d.
$$
Let
\begin{equation}\label{basic_sequence_bis}
f_n(x):= \sum_{j=0}^n Z_j(n,a)e^{ih_j(n)x},\ \ \ n\in \mathbb{N},\ \ \ x\in \mathbb{R},
\end{equation}
be superoscillating functions as in Definition \ref{SUPOSONE} and
assume that their entire extensions to the functions $f_n(\xi)$ converge to $e^{ia\xi}$ in $A_{1,B}$ for some positive real value $0<B<\frac R{4e}$ where $R:=\min_{\ell=1,\ldots, d} R_\ell$.
We define
$$
F_n(x_1,\ldots,x_d):=\sum_{j=0}^n Z_j(n,a)e^{ix_1 G_1(h_j(n))}e^{ix_2 G_2(h_j(n))} \ldots e^{ix_d G_d(h_j(n))}.
$$
Then, whenever $|a|<R$ we have
$$
\lim_{n\to \infty}F_n(x_1,x_2,\ldots,x_d)=e^{ix_1 G_1(a) }e^{ix_2 G_2(a) }\ldots e^{ix_d G_d(a) },
$$
uniformly on compact subsets of $\rr^d$. In particular, $F_n(x_1,x_2,\ldots,x_d)$ is superoscillating when $|a|>1$.
\end{theorem}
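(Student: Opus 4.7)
The plan is to realize $F_n(\underline{x})$ as the value at $\xi=0$ of $\mathcal{U}(x_1,\ldots,x_d,D_\xi)f_n(\xi)$, and then to transfer the given convergence $f_n\to e^{ia\xi}$ in $A_{1,B}$ through the continuous operator supplied by Proposition~\ref{CONTU_bis}. Uniform convergence on compacta will be obtained from Remark~\ref{supernova}, which gives a uniform bound on the operator norm when $\underline{x}$ ranges over a compact set.

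The first step is to establish, for every $h$ with $|h|<R$, the pointwise identity
\begin{equation*}
\mathcal{U}(x_1,\ldots ,x_d,D_\xi)\, e^{ih\xi}\,\Big|_{\xi=0}\;=\;\prod_{\ell=1}^d e^{ix_\ell G_\ell(h)}.
\end{equation*}
Since $\tfrac{D_\xi^{k_1}}{i^{k_1}}e^{ih\xi}=h^{k_1}e^{ih\xi}$, plugging $e^{ih\xi}$ into \eqref{op1} produces $e^{ih\xi}$ times a series in $h$ whose coefficient of $h^{k_1}$ is precisely the inner bracket of \eqref{op1}. By the Cauchy product formula \eqref{cp} applied to $a_p=y_p h^p$, that series equals $\sum_m \tfrac{1}{m!}\bigl(\sum_p y_p h^p\bigr)^m=\exp\!\bigl(\sum_p y_p h^p\bigr)=\prod_\ell e^{ix_\ell G_\ell(h)}$, and evaluating at $\xi=0$ removes the factor $e^{ih\xi}$. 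Linearity in $j$ then gives $\mathcal{U}f_n|_{\xi=0}=F_n(\underline{x})$, and the identity with $h=a$ (legitimate since $|a|<R$) gives $\mathcal{U}e^{ia\xi}|_{\xi=0}=\prod_\ell e^{ix_\ell G_\ell(a)}$, which is exactly the claimed limit.

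For the convergence, Proposition~\ref{CONTU_bis} yields a constant $C_{\underline x,G_1,\ldots,G_d}$ such that $\|\mathcal{U}(\underline x,D_\xi)g\|_{8eB}\leq C_{\underline x,G_1,\ldots,G_d}\,\|g\|_B$ for every $g\in A_{1,B}$. Since the evaluation functional $g\mapsto g(0)$ is bounded on any $A_{1,B'}$, we deduce
\begin{equation*}
\bigl|F_n(\underline{x})-e^{ix_1 G_1(a)}\cdots e^{ix_d G_d(a)}\bigr|\;\leq\;C_{\underline x,G_1,\ldots,G_d}\,\bigl\|f_n-e^{ia\,\cdot}\bigr\|_B.
\end{equation*}
By Remark~\ref{supernova}, $C_{\underline x,G_1,\ldots,G_d}$ is bounded by a single constant $C_K$ as $\underline{x}$ ranges over any compact $K\subset\mathbb{R}^d$, so the right-hand side tends to zero uniformly on $K$. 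When $|a|>1$, the frequencies $h_j(n)$ lie in $[-1,1]$ while $a\notin[-1,1]$, so in the sense of Definition~\ref{superoscill} the sequence $F_n$ is superoscillating.

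The main obstacle I anticipate is the rigorous justification of the rearrangements in the first step, namely interchanging the outer sum in $m$ with the nested Cauchy sums to recognise $\mathcal{U}\,e^{ih\xi}$ as $e^{ih\xi}\exp\!\bigl(\sum_p y_p h^p\bigr)$. This reduces to absolute convergence of $\sum_{m}\tfrac{1}{m!}\bigl(\sum_{p}|y_p|\,|h|^p\bigr)^m$, which for $|h|<R\leq R_\ell$ follows from exactly the type of majorization already carried out in the proof of Proposition~\ref{CONTU_bis}. Once this identity is secured, the remainder of the proof is a routine combination of continuity of $\mathcal{U}$, boundedness of the evaluation functional, and uniformity of constants on compact subsets of $\mathbb{R}^d$.
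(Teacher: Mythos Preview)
Your proposal is correct and follows essentially the same route as the paper's own proof: both arguments recognize $F_n(\underline{x})=\mathcal{U}(x_1,\ldots,x_d,D_\xi)f_n(\xi)\big|_{\xi=0}$ via the Cauchy product identity \eqref{cp}, invoke Proposition~\ref{CONTU_bis} to pass to the limit in $A_{1,B}\to A_{1,8eB}$, compute $\mathcal{U}e^{ia\xi}\big|_{\xi=0}=e^{ix_1G_1(a)}\cdots e^{ix_dG_d(a)}$ (using $|a|<R$), and cite Remark~\ref{supernova} for uniformity on compacta. Your presentation is in fact slightly cleaner in that you make explicit the boundedness of the evaluation functional at $\xi=0$ and the resulting error estimate; the paper leaves these implicit.
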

\begin{proof}
Since $R_\ell\geq 1$ for any $\ell=1,\ldots, d$ and $|h_j(n)|<1$, using \eqref{cp} we have the chain of equalities
\[
\begin{split}
& F_n(x_1,x_1,\ldots,x_d)=\sum_{j=0}^nZ_j(n,a)e^{ix_1 G_1(h_j(n))+ix_2 G_2(h_j(n))+ \ldots +ix_d G_d(h_j(n))}
\\
&
=\sum_{j=0}^nZ_j(n,a) \,
\sum_{m=0}^\infty \frac{1}{m!} \Big[ ix_1 G_1(h_j(n))+ix_2 G_2(h_j(n))+ \ldots +ix_d G_d(h_j(n))\Big]^m
\\
&
=\sum_{j=0}^nZ_j(n,a) \,
\sum_{m=0}^\infty \frac{1}{m!} \Big[ ix_1 \sum_{p=1}^{\infty}g_{1,p}(h_j(n))^p+ \ldots +ix_d \sum_{p=1}^{\infty}g_{d,p}(h_j(n))^p\Big]^m
\\
&
=\sum_{j=0}^nZ_j(n,a) \,
\sum_{m=0}^\infty \frac{1}{m!} \Big[ \sum_{p=1}^{\infty}(i x_1g_{1,p}+ \ldots +ix_dg_{d,p})(h_j(n))^p\Big]^m
\\
&
= \sum_{m=0}^\infty \frac{1}{m!}\sum_{k_1=0}^{\infty}\left(\sum_{k_2=0}^{k_1}\ldots \sum_{k_m=0}^{k_{m-1}} y_{k_m}y_{k_{m-1}-k_m}\ldots y_{k_1-k_2}\right) (h_j(n))^{k_1},
\end{split}
\]
where we have set
$$
y_p:=ix_1g_{1,p}+\ldots+ix_dg_{d,p},\ \ {\rm for} \ \ p=1,\ldots r \ \ {\rm with} \ \ r \in \mathbb{N}.
$$
We define the infinite order differential operator
\begin{equation}\label{arielm}
\mathcal{U}(x_1,x_2,\ldots, x_d,D_\xi):=\sum_{m=0}^\infty \frac{1}{m!}\sum_{k_1=0}^{\infty}\left(\sum_{k_2=0}^{k_1}\ldots \sum_{k_m=0}^{k_{m-1}} y_{k_m}y_{k_{m-1}-k_m}\ldots y_{k_1-k_2}\right) \frac {D_{\xi}^{k_1}}{i^{k_1}}.
\end{equation}
Since $0<B<\frac R{4e}$, Proposition \ref{CONTU_bis} implies that the operator
$$
\mathcal{U}(x_1,x_2,\ldots, x_d,D_\xi):A_{1,B}\mapsto A_{1, 8eB}
$$
is continuous. We observe that
\[
\begin{split}
F_n(x_1,x_2,\ldots,x_d)
=\mathcal{U}(x_1,x_2,\ldots, x_d,D_\xi)\sum_{j=0}^nZ_j(n,a)e^{i\xi h_j(n)}\Big|_{\xi=0}
\end{split}
\]
The explicit computation of the term $\mathcal{U}(x_1,\ldots , x_d,D_\xi) e^{i\xi a}$ gives
\[
\begin{split}
&\mathcal{U}(x_1,\ldots, x_d, D_\xi)e^{i\xi a}=\\
&=\sum_{m=0}^\infty \frac{1}{m!}\sum_{k_1=0}^{\infty}\left(\sum_{k_2=0}^{k_1}\ldots \sum_{k_m=0}^{k_{m-1}} y_{k_m}y_{k_{m-1}-k_m}\ldots y_{k_1-k_2}\right) \frac {D_{\xi}^{k_1}}{i^{k_1}}e^{i\xi a}
\\
&
=\sum_{m=0}^\infty \frac{1}{m!}\sum_{k_1=0}^{\infty}\left(\sum_{k_2=0}^{k_1}\ldots \sum_{k_m=0}^{k_{m-1}} y_{k_m}y_{k_{m-1}-k_m}\ldots y_{k_1-k_2}\right) a^{k_1}e^{i\xi a},
\end{split}
\]
so we finally get
\[
\begin{split}
&\lim_{n\to \infty} F_n(x_1,\ldots, x_d)=\\
&=\sum_{m=0}^\infty \frac{1}{m!}\sum_{k_1=0}^{\infty}\left(\sum_{k_2=0}^{k_1}\ldots \sum_{k_m=0}^{k_{m-1}} y_{k_m}y_{k_{m-1}-k_m}\ldots y_{k_1-k_2}\right) a^{k_1}e^{i\xi a}\Big|_{\xi=0}
\\
&
=\sum_{m=0}^\infty \frac{1}{m!}\sum_{k_1=0}^{\infty}\left(\sum_{k_2=0}^{k_1}\ldots \sum_{k_m=0}^{k_{m-1}} (y_{k_m}a^{k_m})(y_{k_{m-1}-k_m}a^{k_{m-1}-k_m})\ldots (y_{k_1-k_2}a^{k_1-k_2})\right)
\\
&=\sum_{m=0}^\infty \frac{1}{m!}
\left(\sum_{p=1}^\infty y_p a^p\right)^m  =\sum_{m=0}^\infty \frac{1}{m!}
\left(ix_1G_1(a)+\ldots+ix_dG_d(a)\right)^m = e^{ix_1G_1(a)+\ldots+ ix_dG_d(a)}
\end{split}
\]
where the third equality is due to the formula \eqref{cp} and the fourth equality holds because we are assuming $|a|<R$. The previous limit is uniform over the compact subset of $\rr^d$ because of Remark \ref{supernova}.

\end{proof}

\begin{remark}
From the inspection of the proof we observe that:
\\
(I) The space of the entire functions on which the infinite order differential operator $\mathcal{U}(x_1,\ldots ,\,x_d, D_\xi)$ acts is the space $A_{1,B}$ in one complex variable, for some positive real value $0<B<\frac R{4e}$.
\\
(III) The variables $(x_1,x_1,\ldots,x_d)$ become the coefficients of the infinite order differential operator $\mathcal{U}(x_1,x_2,\ldots, x_d,D_\xi)$, defined in (\ref{arielm}), that still acts
 on the space $A_{1,B}$.
\end{remark}

\section{Supershifts in several variables}

The procedure to define superoscillating functions can be extended to the case of supershift.
 Recall that the supershift property of a function extends the notion
 of superoscillation and that this concept, that we recall below, turned out to be a crucial ingredient for the study of the evolution
 of superoscillatory functions as initial conditions of the Schr\"odinger equation.

\begin{definition}[Supershift]\label{Super-shift}
Let $\mathcal{I}\subseteq\mathbb{R}$ be an interval with $[-1,1]\subset \mathcal I$
and let
$\varphi:\, \mathcal I  \times \mathbb{R}\to \mathbb R$, be a continuous function on $\mathcal I$.
We set
$$
\varphi_{h}(x):=\varphi(h,x), \ \ h\in \mathcal{I},\ \ x\in \mathbb R
 $$
and we consider a sequence of points $(h_{j}(n))$ such that
 $$
  h_{j}(n)\in [-1,1] \ \  {\rm for} \ \  j=0,...,n \ \ {\rm  and} \ \ n\in\mathbb{N}_0.
 $$
   We define the functions
\begin{equation}\label{psisuprform}
\psi_n(x)=\sum_{j=0}^nc_j(n)\varphi_{h_{j}(n)}(x),
\end{equation}
where  $(c_j(n))$ is a sequence of complex numbers for $j=0,...,n$ and $n\in\mathbb{N}_0$.
If
$$
\lim_{n\to\infty}\psi_n(x)=\varphi_{a}(x)
$$
for some $a\in\mathcal I$ with $|a|>1$, we say that the function
$\psi_n(x)$, for  $x\in \mathbb{R}$, admits a supershift.
 \end{definition}

 \begin{remark}
 The term supershift comes from the fact that the interval
  $\mathcal I$ can be arbitrarily large (it can also be $\mathbb R$)
  and that the constant $a$ can be arbitrarily far away from the interval $[-1,1]$ where the functions $\varphi_{h_{j,n}}(\cdot)$ are indexed, see \eqref{psisuprform}.
 \end{remark}

\noindent Problem \ref{gsgdfaA}, for the supershift case, is formulated as follows.

\begin{problem}\label{gsgdfaSUP}
Let $h_j(n)$ be a given set of points in $[-1,1]$, $j=0,1,...,n$, for $n\in \mathbb{N}$ and let $a\in \mathbb{R}$ be such that $|a|>1$.
Suppose that for every $x\in \mathbb{R}$ the function $h\mapsto G(h x )$
extends to a holomorphic and entire function in $h$.
Consider  the functions
$$
f_n(x)=\sum_{j=0}^nY_j(n,a)G(h_j(n)x),\ \ \ x\in \mathbb{R}
$$
where
$h\mapsto G(h x)$ depends on the parameter $x\in \mathbb{R}$.
Determine the coefficients $Y_j(n)$
in such a way that
\begin{equation}\label{eqsuper}
f_n^{(p)}(0)=(a)^pG^{(p)}(0)\ \ \ for \ \ \ p=0,1,...,n.
\end{equation}
\end{problem}
The solution of Problem \ref{gsgdfaSUP}, obtained in \cite{newmet}, is summarized in the following theorem.
\begin{theorem}\label{thsupsh}
Let $h_j(n)$ be a given set of points in $[-1,1]$, $j=0,1,...,n$ for $n\in \mathbb{N}$ and let $a\in \mathbb{R}$ be such that $|a|>1$.
If $h_j(n)\not= h_i(n)$ for every $i\not=j$ and $G^{(p)}(0)\not=0$ for all $p=0,1,...,n$,
then there exists a unique solution $Y_j(n,a)$ of the linear system (\ref{eqsuper})
and it is given by
$$
Y_j(n,a)=
\prod_{k=0,\  k\not=j}^n\Big(\frac{h_k(n)-a}{h_k(n)-h_j(n)}\Big),
$$
so that
$$
f_n(x)=\sum_{j=0}^n\prod_{k=0,\  k\not=j}^n\Big(\frac{h_k(n)-a}{h_k(n)-h_j(n)}\Big)G(h_j(n)x),\ \ \ x\in \mathbb{R}.
$$
\end{theorem}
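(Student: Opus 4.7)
The plan is to reduce the system \eqref{eqsuper} to the classical Vandermonde/Lagrange interpolation problem that already underlies Theorem \ref{exsolution}, exploiting the hypothesis $G^{(p)}(0)\neq 0$ to strip off the $G$--data. First I would differentiate the ansatz
$$
f_n(x)=\sum_{j=0}^n Y_j(n,a)\,G(h_j(n)x)
$$
directly: by the chain rule, $\frac{d^p}{dx^p}G(h_j(n)x)=h_j(n)^p\,G^{(p)}(h_j(n)x)$, so evaluation at $x=0$ gives
$$
f_n^{(p)}(0)=G^{(p)}(0)\sum_{j=0}^n Y_j(n,a)\,h_j(n)^p,\qquad p=0,1,\dots,n.
$$
Because $G^{(p)}(0)\neq 0$ by assumption, the prescribed identities $f_n^{(p)}(0)=a^p G^{(p)}(0)$ are equivalent to the $(n+1)\times(n+1)$ linear system
$$
\sum_{j=0}^n Y_j(n,a)\,h_j(n)^p=a^p,\qquad p=0,1,\dots,n.
$$

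The coefficient matrix of this system in the unknowns $Y_0(n,a),\dots,Y_n(n,a)$ is the Vandermonde matrix built from the nodes $h_0(n),\dots,h_n(n)$, whose determinant equals $\prod_{i<j}(h_j(n)-h_i(n))$. Since the nodes are pairwise distinct by hypothesis, the system has a unique solution. To identify it explicitly, I would rephrase the system as the statement that the linear functional $q\mapsto \sum_{j=0}^n Y_j(n,a)\,q(h_j(n))$ agrees with $q\mapsto q(a)$ on the basis $\{1,t,t^2,\dots,t^n\}$ of polynomials of degree at most $n$, hence on every such polynomial. Specializing to the Lagrange basis polynomials $L_j(t)=\prod_{k\neq j}(t-h_k(n))/(h_j(n)-h_k(n))$ (which satisfy $L_j(h_i(n))=\delta_{ij}$) immediately yields
$$
Y_j(n,a)=L_j(a)=\prod_{k=0,\,k\neq j}^n\frac{a-h_k(n)}{h_j(n)-h_k(n)}=\prod_{k=0,\,k\neq j}^n\frac{h_k(n)-a}{h_k(n)-h_j(n)},
$$
which is exactly the formula stated in the theorem. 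Substituting this back into the ansatz for $f_n$ produces the closed form in the last display.

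There is essentially no analytic obstacle here: every step is either a direct differentiation, the nonvanishing hypothesis on $G^{(p)}(0)$, or an appeal to the well-known invertibility of the Vandermonde matrix and the Lagrange interpolation formula. The only point that deserves care is verifying that the hypothesis $G^{(p)}(0)\neq 0$ for $p=0,1,\dots,n$ is used precisely (and only) to decouple the $G$--data from the Vandermonde system; the entirety of $h\mapsto G(hx)$ is not needed for the algebraic solvability of \eqref{eqsuper}, but it ensures that the functions $f_n$ are well defined on $\mathbb{R}$, exactly as phrased in Problem \ref{gsgdfaSUP}.
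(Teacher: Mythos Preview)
Your argument is correct. Note, however, that the paper does not actually supply its own proof of this theorem: it merely states the result and attributes the proof to \cite{newmet} (``The solution of Problem \ref{gsgdfaSUP}, obtained in \cite{newmet}, is summarized in the following theorem''). Your reduction---dividing out the nonzero factors $G^{(p)}(0)$ to obtain the Vandermonde system $\sum_j Y_j(n,a)h_j(n)^p=a^p$ and then reading off the unique solution via Lagrange interpolation---is precisely the natural approach and parallels the solution of the companion Problem~\ref{gsgdfaA} recorded in Theorem~\ref{exsolution} (also taken from \cite{newmet}). There is nothing to correct.
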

\begin{remark}
In the sequel, we shall move from the real to the complex setting and we will consider those functions $G$ and sequences
 $h_j(n)$ for which the holomorphic extension $f_n(z)$ of $f_n(x)$ converges in $A_1$ to $G(az)$.
\end{remark}

We can now extend the notion of supershift of a function in several variables.

\begin{definition}[Supershifts in several variables]
Let $|a|>1$.
For $d\in \mathbb{N}$ with $d\geq 2$, we assume that
$(x_1,...,x_d)\in \mathbb{R}^d$. Let
 $(h_{j,\ell}(n))$,  $j=0,...,n$  for $ n\in \mathbb{N}_0$, be  real-valued sequences for $\ell=1,...,d$
 such that for
$$
\sup_{j=0,\ldots ,n,\ n\in\mathbb{N}_0} \  |h_{j,\ell}(n)|\leq 1 ,\ \ {\rm for} \ \ell=1,...,d
$$
and let $G_\ell(\lambda)$, for $ \ell=1,...,d$, be entire holomorphic functions.
We say that
the sequence
\begin{equation}\label{basic_sequence_sevFF}
F_n(x_1,\ldots ,x_d)=\sum_{j=0}^n c_j(n)
G_1(x_1 h_{j,1}(n)) G_2(x_2 h_{j,2}(n))\ldots G_d(x_d h_{j,d}(n)),
\end{equation}
where  $(c_j(n))_{j,n}$, $j=0,\ldots ,n$, for $ n\in \mathbb{N}_0$ is a  complex-valued sequence,
admits the supershift property if
$$
\lim_{n\to \infty}F_n(x_1,\ldots ,x_d)=
G_1(x_1 a) G_2(x_2 a)\ldots G_d(x_d a).
$$

\end{definition}

\begin{theorem}[The case of $d \geq 1$ variables]\label{propm=3}
Let $|a|>1$ and let
\begin{equation}\label{basic_sequence}
f_n(x):= \sum_{j=0}^n Z_j(n,a)e^{ih_j(n)x},\ \ \ n\in \mathbb{N},\ \ \ x\in \mathbb{R},
\end{equation}
be a superoscillating function as in Definition \ref{SUPOSONE} and
assume that its holomorphic extension to the entire functions $f_n(z)$ converges to $e^{iaz}$ in the space $A_{1,B}$ for some positive real value $B$.
Let $d$ be a positive integer and let $R_\ell\in\rr_{+}\cup\{\infty\}$ for any $\ell=1,\dots,\, d$. Let $G_1,\dots,\, G_d$ be holomorphic functions whose series expansion at zero is given by
\begin{equation}\label{seriesG}
G_\ell(\lambda)=\sum_{m_\ell=0}^\infty g_{\ell,m}\lambda^{m_\ell},\quad \forall \ell=1,\dots,\, d.
\end{equation}
Moreover, we suppose the sequences $(g_{l,m})$'s satisfy the condition
$$
\lim \sup_{m\to\infty}|g_{\ell,m}|^{1/m}=\frac 1{R_\ell},\ \ \forall  \ell=1,\dots,\, d.
$$
We define
$$
F_n(x_1,\dots ,x_d)=\sum_{j=0}^nZ_j(n,a)G_1(x_1 h_j(n))\cdots G_d(x_dh_j(n)),
$$
where $Z_j(n,a)$ are given as in (\ref{basic_sequence}). Then, $F_n(x_1,\dots,\,x_d)$ admits the supershift property that is
$$
\lim_{n\to \infty}F_n(x_1,\dots, x_d)=G_1(x_1a)\cdots G_d(x_da)
$$
uniformly on compact subsets of $\{\underline x\in\mathbb R^d: | x_\ell|<R'\,\textrm{for any $\ell=1,\ldots, d$}\}$ where
\[
R':=\min\left( \frac R{|a|},\, \frac{R}{4eB} ,\,R \right)
\qquad
{\it \ where\ \  }
R:=\min_{\ell=1,\dots, d}R_\ell.
\]
\end{theorem}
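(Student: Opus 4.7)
The plan is to reduce the claim to applying the continuous operator $\mathcal{V}(x_1,\dots,x_d,D_\xi)$ of Proposition \ref{contsupshif_bis} to the entire extensions $f_n(\xi)$, in exact parallel with how the proof of Theorem \ref{propm=2} exploited $\mathcal{U}$. First, I would rewrite each factor $G_\ell(x_\ell h_j(n))$ using its Taylor series \eqref{seriesG} and form the product, which legitimately converges since $|h_j(n)|\le 1\le R_\ell$ and $|x_\ell|<R$:
\[
\prod_{\ell=1}^d G_\ell(x_\ell h_j(n))=\sum_{m_1=0}^\infty g_{1,m_1}\cdots\sum_{m_d=0}^\infty g_{d,m_d}\,x_1^{m_1}\cdots x_d^{m_d}\,h_j(n)^{m_1+\dots+m_d}.
\]
Since $h_j(n)^{k}=\frac{1}{i^{k}}D_\xi^{k}e^{ih_j(n)\xi}\big|_{\xi=0}$, summing over $j$ against the coefficients $Z_j(n,a)$ and comparing with \eqref{inftysh_bis} gives the key identity
\[
F_n(x_1,\dots,x_d)=\mathcal{V}(x_1,\dots,x_d,D_\xi)\,f_n(\xi)\Big|_{\xi=0}.
\]

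Next, under the hypothesis $|x_\ell|<R/(4eB)$, Proposition \ref{contsupshif_bis} ensures that $\mathcal{V}(x_1,\dots,x_d,D_\xi):A_{1,B}\to A_{1,8eB}$ is continuous. Since by assumption $f_n\to e^{ia\xi}$ in $A_{1,B}$, continuity yields $\mathcal{V} f_n\to \mathcal{V} e^{ia\xi}$ in $A_{1,8eB}$, and in particular pointwise at $\xi=0$. I would then compute $\mathcal{V}(x_1,\dots,x_d,D_\xi)e^{ia\xi}$ directly from \eqref{inftysh_bis}: each $\frac{1}{i^{m_1+\dots+m_d}}D_\xi^{m_1+\dots+m_d}$ brings down $a^{m_1+\dots+m_d}$, which regroups as
\[
\mathcal{V}(x_1,\dots,x_d,D_\xi)e^{ia\xi}=\left(\sum_{m_1=0}^\infty g_{1,m_1}(ax_1)^{m_1}\right)\cdots\left(\sum_{m_d=0}^\infty g_{d,m_d}(ax_d)^{m_d}\right)e^{ia\xi},
\]
and the Fubini-style regrouping is legitimate precisely because $|ax_\ell|<R\le R_\ell$ for each $\ell$, i.e.\ because $|x_\ell|<R/|a|$. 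Recognizing the factors as $G_\ell(x_\ell a)$ and setting $\xi=0$ gives $G_1(x_1a)\cdots G_d(x_da)$ as the desired limit.

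Finally, the three simultaneous constraints $|x_\ell|<R$ (for the initial rearrangement), $|x_\ell|<R/(4eB)$ (for the continuity of $\mathcal{V}$), and $|x_\ell|<R/|a|$ (for the summation of $\mathcal{V}e^{ia\xi}$) amount exactly to $|x_\ell|<R'$. For uniformity on compact subsets $K$ of $\{\underline x:|x_\ell|<R'\}$, I would invoke Remark \ref{rnova1}: the constants $C_{x_1},\dots,C_{x_d}$ controlling $\|\mathcal{V}(x_1,\dots,x_d,D_\xi)g\|_{8eB}\le C_{x_1,\dots,x_d}\|g\|_B$ remain bounded by a $K$-dependent constant, so
\[
\sup_{\underline x\in K}\bigl|F_n(x_1,\dots,x_d)-G_1(x_1a)\cdots G_d(x_da)\bigr|\le C_K\,\|f_n-e^{ia\cdot}\|_B\to 0.
\]
The main obstacle I anticipate is bookkeeping: making sure the triple set of conditions on $|x_\ell|$ is correctly identified with the three distinct analytic steps (power-series rearrangement, operator continuity, evaluation of $\mathcal{V}e^{ia\xi}$), and that the Cauchy-product/Fubini interchanges are justified by absolute convergence at each stage—everything else is a direct transcription of the argument used for Theorem \ref{propm=2} with $\mathcal{V}$ in place of $\mathcal{U}$.
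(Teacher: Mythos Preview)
Your proposal is correct and follows essentially the same approach as the paper's own proof: expand each $G_\ell(x_\ell h_j(n))$ via its Taylor series, recognize $F_n$ as $\mathcal{V}(x_1,\dots,x_d,D_\xi)f_n(\xi)\big|_{\xi=0}$, apply the continuity of $\mathcal{V}$ from Proposition \ref{contsupshif_bis} to pass to the limit, evaluate $\mathcal{V}e^{ia\xi}$ directly, and invoke Remark \ref{rnova1} for uniformity on compacta. Your identification of the three separate roles of the constraints $|x_\ell|<R$, $|x_\ell|<R/(4eB)$, and $|x_\ell|<R/|a|$ exactly matches the paper's reasoning.
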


\begin{proof}
Since $| x_\ell|<R\,\,\textrm{for any $\ell=1,\ldots, d$}$, we have
\[
\begin{split}
F_n(x_1,\dots, x_d)&=\sum_{j=0}^nZ_j(n,a)G_1(x_1 h_j(n))\dots G_d(x_d h_j(n))
\\
&
=\sum_{j=0}^nZ_j(n,a)\sum_{m_1=0}^\infty  g_{m_1}\dots \sum_{m_d=0}^\infty g_{m_d}x_1^{m_1}\cdots x_d^{m_d}(h_j(n))^{m_1+\dots +m_d}.
\end{split}
\]
 We now consider the
 auxiliary complex variable $\xi$ and we note that
\begin{equation}
\lambda^\ell=\frac{1}{i^\ell}D_\xi^\ell e^{i\xi \lambda}\Big|_{\xi=0}\ \ \ {\rm for }\ \ \ \lambda\in \mathbb{C}, \ \ \ \ell\in \mathbb{N},
\end{equation}
where $D_\xi$ is the derivative with respect to $\xi$ and $|_{\xi=0}$ denotes the restriction to $\xi=0$. We have
\[
\begin{split}
F_n(x_1,\ldots, x_d)&=\sum_{j=0}^nZ_j(n,a)\sum_{m_1=0}^\infty  g_{m_1}\dots \sum_{m_d=0}^\infty g_{m_d}x_1^{m_1}\cdots x_d^{m_d}[h_j(n)]^{m_1+\dots +m_d}
\\
&
= \sum_{j=0}^nZ_j(n,a)\sum_{m_1=0}^\infty  g_{m_1}\dots\sum_{m_d=0}^\infty g_{m_d}x_1^{m_1}\cdots x_d^{m_d} \frac{1}{i^{m_1+\dots +m_d}}D_\xi^{m_1+\dots +m_d} e^{i\xi h_j(n)}\Big|_{\xi=0}
\\
&
= \sum_{m_1=0}^\infty  g_{m_1}\dots \sum_{m_d=0}^\infty g_{m_d}x_1^{m_1}\cdots x_d^{m_d} \frac{1}{i^{m_1+\dots +m_d}}D_\xi^{m_1+\dots +m_d} \sum_{j=0}^nZ_j(n,a)e^{i\xi h_j(n)}\Big|_{\xi=0}.
\end{split}
\]
We define the operator
$$\mathcal{V}(x_1,\dots, x_d, D_\xi):= \sum_{m_1=0}^\infty  g_{m_1}\dots \sum_{m_d=0}^\infty g_{m_d}x_1^{m_1}\cdots x_d^{m_d} \frac{1}{i^{m_1+\dots +m_d}}D_\xi^{m_1+\dots +m_d}$$
so that we can write
$$
F_n(x_1,\dots, x_d)=\mathcal{V}(x_1,\dots, x_d, D_\xi)\sum_{j=0}^nZ_j(n,a)e^{i\xi h_j(n)}\Big|_{\xi=0}.
$$
Since $| x_\ell|<\frac R{4eB}\,\,\textrm{for any $\ell=1,\ldots, d$}$, we can use Proposition \ref{contsupshif_bis} in order to compute the following limit
\[
\begin{split}
\lim_{n\to \infty} F_n(x_1,\dots, x_d)& = \mathcal{V}(x_1,\dots, x_d, D_\xi) \lim_{n\to\infty} \sum_{j=0}^nZ_j(n,a)e^{i\xi h_j(n)}\Big|_{\xi=0}\\
& = \mathcal{V}(x_1,\dots, x_d, D_\xi) e^{i\xi a}\Big|_{\xi=0}\\
& = \sum_{m_1=0}^\infty  g_{1,m_1}\dots \sum_{m_d=0}^\infty g_{d,m_d}x_1^{m_1}\dots x_d^{m_2} \frac{1}{i^{m_1+\dots +m_d}}D_\xi^{m_1+\dots +m_2}e^{i\xi a}\Big|_{\xi=0}\\
&=\sum_{m_1=0}^\infty  g_{1,m_1}\dots \sum_{m_d=0}^\infty g_{d,m_d}(ax_1)^{m_1}\dots (ax_d)^{m_2}=G_1(ax_1)\cdots G_d(ax_d)
\end{split}
\]
where the last equality holds because we are assuming $| x_\ell|< \frac R{|a|}\,\,\textrm{for any $\ell=1,\ldots, d$}$. The previous limit is uniform over the compact subset of $\{\underline x\in\rr^d:\, |x_\ell|<R'\,\textrm{for any $\ell=1,\ldots, d$}\}$ because of Remark \ref{rnova1}.
\end{proof}

\begin{remark}
A special case of the previous theorem occurs when the holomorphic functions $G_\ell$'s are entire functions. Moreover, differently from Theorem \ref{propm=2}, in Theorem \ref{propm=3} the parameters $x_\ell$ appear in the arguments of the functions $G_\ell$'s. This implies that the hypothesis of Theorem \ref{propm=3} imposes more constraints on the parameters $x_\ell$'s, namely $ |x_\ell|<R'$ for any $\ell=1,\ldots, d$.
\end{remark}

\end{document}